\def\tp#1#2{\mathrm {Tp}_{#1}(#2)}
\def\meet{\land}
\def\Fraisse{Fra\"{\i}ss\' e}
\def\height#1#2#3{\mathrm {height}_{#1}(#2,\allowbreak #3)}
\def\str#1{\mathbf {#1}}
\def\Emb{\mathop{\mathrm{Emb}}\nolimits}
\newtheorem{theorem}{Theorem}[section]
\theoremstyle{definition}
\newtheorem{definition}[theorem]{Definition}
\newtheorem{problem}[theorem]{Problem}
\theoremstyle{remark}
\numberwithin{equation}{section}
\title{On Big Ramsey degrees of universal $\omega$-edge-labeled hypergraphs}
\author{
Jan Hubička\thanks{Department of Applied Mathematics (KAM), Charles University, Ma\-lo\-stranské~nám\v estí 25, Praha 1, Czech Republic E-mail: {\tt hubicka@kam.mff.cuni.cz}. Supported by project 25-15571S of  the  Czech  Science Foundation (GA\v CR)}
\and
Matěj Konečný\thanks{Institute of Algebra, TU Dresden, Dresden, Germany. E-mail: {\tt matej.konecny@tu-dresden.de}. Supported by a project that has received funding from the European Union (Project POCOCOP, ERC Synergy Grant 101071674).  Views and opinions expressed are however those of the authors only and do not necessarily reflect those of the European Union or the European Research Council Executive Agency. Neither the European Union nor the granting authority can be held responsible for them.}
\and
Stevo Todorcevic\thanks{Department of Mathematics, University of Toronto, 40 St George St., Toronto, ON,  Canada. E-mail: {\tt stevo@math.toronto.edu}. Supported by NSERC grants RGPIN-2019-455916 and RGPIN-2025-04386.}
\and
Andy Zucker\thanks{Department of Pure Mathematics, University of Waterloo, 200 University Ave W, Waterloo, ON, Canada. E-mail: {\tt a3zucker@uwaterloo.ca}. Supported by NSERC grants RGPIN-2023-03269 and DGECR-2023-00412.}
}
\newcommand{\shorttitle}{\@title}
\def\@maketitle{%
  \newpage
  \begin{center}%
  \let \footnote \thanks
    {\small Proceedings of the 13th European Conference on Combinatorics, Graph Theory and Applications\\ EUROCOMB'25\\
    Budapest, August 25 - 29, 2025
    }
    \vskip 0.5em
    \rule{\linewidth}{0.04cm}
    \vskip 3.5em
    {\LARGE \textbf{\textsc{\@title}} \par}%
    \vskip 1.5em
    {\textbf{\textsc{(Extended abstract)}} \par}
    \vskip 2.5em%
    {\large
      \lineskip .5em%
      \begin{tabular}[t]{c}%
        \@author
      \end{tabular}\par}%
  \end{center}%
  \par
  }
\begin{document}

\thispagestyle{empty}
\maketitle

\begin{abstract}
	We show that the big Ramsey degrees of every countable universal $u$-uniform $\omega$-edge-labeled hypergraph are infinite for every $u\geq 2$.
	Together with a recent result of Braunfeld, Chodounsk\'y, de Rancourt,
	Hubi\v cka, Kawach, and Kone\v cn\'y  this finishes full
	characterisation of unrestricted relational structures with finite big
	Ramsey degrees.
\end{abstract}


\section{Introduction}
Let $A$ be a set and let $u$ be a positive integer. We denote by $\binom{A}{u}$ the set of all $u$-element subsets of $A$.
Given a countable set $L$ of \emph{labels}, an \emph{$L$-edge-labeled $u$-uniform hypergraph} (or simply an \emph{edge-labeled hypergraph}) is a pair $\str{A} = (A, e_\str{A})$, where $e_\str{A}$ is a function $e_\str{A} \colon \binom{A}{u} \to L$.
We call $A$ the \emph{vertex set} of $\str{A}$ and consider only finite and countably infinite vertex sets.  We say that $\str{A}$ is \emph{finite} if $A$ is finite.
We will view $u$-uniform hypergraphs as $\{0,1\}$-edge-labeled $u$-uniform hypergraphs (where the label 0 represents non-edges) and \emph{graphs} as $\{0,1\}$-edge-labeled 2-uniform hypergraphs.

Given $L$-edge-labeled $u$-uniform hypergraphs $\str{A} = (A, e_\str{A})$ and $\str{B} = (B, e_\str{B})$, an \emph{embedding $f\colon \str{A}\to\str{B}$} is an injective function $f\colon A\to B$ such that for every $E\in \binom{A}{u}$ we have $e_\str{A}(E) = e_\str{B}(f[E])$, where $f[E]=\{f(v):v\in E\}$. If $A\subseteq B$ and the inclusion map is an embedding, we call $\str{A}$ a \emph{substructure} of $\str{B}$.   We say that $\str{A}$ is \emph{homogeneous} if every isomorphism between finite substructures of $\str{A}$ extends to an automorphism of $\str{A}$, and $\str{A}$ is \emph{universal} if every countable $L$-edge-labeled $u$-uniform hypergraph embeds into $\str{A}$.  It is a well-known consequence of the \Fraisse{} theorem~\cite{Fraisse1953} that for every finite integer $u$ and finite or countable set $L$ there exists an up-to-isomorphism unique universal and homogeneous $L$-edge-labeled hypergraph $\str{R}^u_L$. Equivalently, $\str{R}^u_L$ can be characterised by the \emph{extension property}: For every $u$-uniform $L$-edge-labeled hypergraph $\str{B}$ and its finite substructure $\str{A}$, every embedding $\str{A}\to\str{R}^u_\omega$ extends to an embedding $\str{B}\to\str{R}^u_\omega$, see e.g.\ \cite{Hodges1993}. If $\mu$ is a probability measure on $L$ with full support, then letting $e_\mu\colon \binom{\omega}{u}\to L$ be randomly generated according to $\mu$, the structure $(\omega, e_\mu)$ is with probability $1$ isomorphic to $\str{R}^u_L$, and thus hypergraphs $\str{R}^u_L$ can be called \emph{random} countable edge-labeled hypergraphs. $\str{R}^2_{\{0,1\}}$ is known as the \emph{random graph} or \emph{Rado graph}~\cite{cameron1997random}. 
Given edge-labeled hypergraphs $\str{A}$ and $\str{B}$, we denote by $\Emb(\str{A}, \str{B})$ the set of all embeddings from $\str{A}$ to $\str{B}$. If $\str{C}$ is another edge-labeled hypergraph and $\ell\leq k< \omega$, we write $\str{C} \longrightarrow (\str{B})^\str{A}_{k,\ell}$ to denote the following statement:
\begin{quote}
    For every colouring $\chi \colon \Emb(\str{A}, \str{C}) \to \{1, \dots, k\}$ with $k$ colours, there exists an embedding $f \colon \str{B} \to \str{C}$ such that the restriction of $\chi$ to $\Emb(\str{A}, f(\str{B}))$ takes at most $\ell$ distinct values.
\end{quote}
For a countably infinite edge-labeled hypergraph $\str{B}$ and a finite substructure $\str{A}$ of $\str{B}$, the \emph{big Ramsey degree of $\str{A}$ in $\str{B}$} is the least number $D\in \omega$ (if it exists) such that $\str{B} \longrightarrow (\str{B})^\str{A}_{k,D}$ for every $k\in \omega$. We say that $\str{B}$ \emph{has finite big Ramsey degrees} if the big Ramsey degree of every finite substructure $\str{A}$ of $\str{B}$ exists.

In 1969 Laver introduced a proof technique which shows that $\str{R}^2_L$ has finite big Ramsey degrees for every finite set $L$~\cite{devlin1979,erdos1974unsolved,todorcevic2010introduction}. This was refined by Laflamme, Sauer, and Vuksanovic~\cite{Laflamme2006} to precisely characterise the big Ramsey degrees of these structures. Finiteness of big Ramsey degrees of $\str{R}^3_{\{0,1\}}$ was announced at Eurocomb 2019 by Balko, Chodounsk\'y, Hubi\v cka, Kone\v cn\'y, and Vena~\cite{Hubickabigramsey} with a proof published in 2020~\cite{Hubicka2020uniform}. In 2024, Braunfeld, Chodounsk\'y, de Rancourt, Hubi\v cka, Kawach, and Kone\v cn\'y~\cite{braunfeld2023big} extended the proof to arbitrary finite $u>0$ and finite $L$ and generalised the setup to model-theoretic $L'$-structures where $L'$ is a (possibly infinite) relational language containing only finitely many relations of every given arity $a>1$.
Answering Question 7.5 of \cite{braunfeld2023big} we show that the assumption about finiteness of $L$ as well as the above assumption about language $L'$ is necessary:
\begin{theorem}
	\label{thm:main}
	Let $u>1$ be finite and let $\str{A}$ be any $\omega$-edge-labeled $u$-uniform hypergraph with 2 vertices.  Then $\str{A}$ does not have finite big Ramsey degree in $\str{R}^u_\omega$.
\end{theorem}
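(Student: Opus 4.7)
The plan is to prove Theorem~\ref{thm:main} by constructing, for every finite $D$, a finite-valued coloring $\chi_D\colon \Emb(\str{A}, \str{R}^u_\omega) \to \{1, 2, \dots, k_D\}$ such that for every embedding $g\colon \str{R}^u_\omega \to \str{R}^u_\omega$, the restriction of $\chi_D$ to $\Emb(\str{A}, g(\str{R}^u_\omega))$ attains more than $D$ distinct values. By the definition of big Ramsey degree, this immediately implies that the degree of $\str{A}$ in $\str{R}^u_\omega$ cannot be any finite number.

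We fix an enumeration $V(\str{R}^u_\omega) = \omega$ and construct $\chi_D$ using the infinite label set $\omega$. For $u \geq 3$, the substructure $\str{A}$ has no hyperedges, so embeddings of $\str{A}$ correspond to arbitrary ordered pairs of distinct vertices; we color a pair $P = \{x_i, x_j\}$ using labels of $u$-hyperedges of the form $S \cup P$ for an adaptive choice of a $(u-2)$-element set $S$ depending on $P$. For $u = 2$, the intrinsic edge label of $\str{A}$ is a fixed $\ell \in \omega$, and embeddings are $\ell$-labeled pairs; we color instead using the pattern of edge labels between $\{x_i, x_j\}$ and appropriately chosen earlier vertices, extracting a finite invariant, for example a combination of canonically selected \emph{splitting} labels. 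In both cases, the fact that $\omega$ provides infinitely many labels is the essential feature.

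The key technical step is to show that no copy $g(\str{R}^u_\omega)$ can have $\chi_D$ attain at most $D$ values on its pairs. This relies on the extension property of the copy (which, being isomorphic to $\str{R}^u_\omega$, satisfies the same axiom): given any finite set $C$ of already-used colors, we use extension to realize a new embedding of $\str{A}$ whose $\chi_D$-color falls outside $C$. The main obstacle is designing $\chi_D$ so that it resists trivialization by adversarial copies. A naive coloring based on a fixed external parameter can be collapsed to a single value by choosing the copy to have constant labels involving that parameter, which still yields a copy of $\str{R}^u_\omega$ by the extension property. To avoid this, the coloring's \emph{auxiliary witnesses} must depend adaptively on the pair $P$, so that no single uniform constraint on the copy can collapse all colors. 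This is precisely what the infinite label set enables, in contrast to the finite-label setting of~\cite{braunfeld2023big} where a Milliken-type Ramsey argument yields finite big Ramsey degrees; with $\omega$ labels, such a Ramsey-theoretic bound is unavailable.
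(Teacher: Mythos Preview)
Your proposal is not a proof; it is an outline that names the central difficulty but never resolves it. Nowhere do you actually \emph{define} a colouring: phrases such as ``an adaptive choice of a $(u-2)$-element set $S$ depending on $P$'' and ``appropriately chosen earlier vertices, extracting a finite invariant, for example a combination of canonically selected splitting labels'' are placeholders, not constructions. The argument you sketch for the key step---use the extension property of the copy to force a colour outside any finite set $C$---does not go through as stated: the extension property controls the isomorphism type of a finite configuration \emph{inside} the copy, whereas your colour is supposed to depend on auxiliary data (labels involving the witness set $S$) that need not lie in the copy and over which the extension property gives no control. Conversely, if all the data used by $\chi_D$ are to lie inside the copy, you have not said what they are or why the adversary cannot collapse them; you yourself point out that naive choices fail.

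For comparison, the paper's proof supplies exactly the missing ingredient. It fixes an enumeration of $\str{R}^u_\omega$, builds a \emph{finitely branching} ``compressed'' tree $T_f$ of partial types governed by a rapidly growing bound $f$, and assigns to each vertex its $f$-type. The colouring of a pair is then the $f$-height $\height{f}{v}{w}$ (Definition~\ref{def:height}), obtained by iterating meets in $T_f$ together with the types of the meet levels; one takes $\chi_n=\height{f}{\cdot}{\cdot}\bmod n$. The substantive work is Theorem~\ref{thm:main2}: the growth condition on $f$ guarantees that each node of $T_f$ has more immediate successors than there are nodes at its level, and this pigeonhole, combined with a Laflamme--Sauer--Vuksanovic style argument, produces in every copy of $\str{R}^u_\omega$ pairs of arbitrarily large $f$-height (with the additional label constraint when $u=2$). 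None of this machinery---the compressed tree, the iterated-meet height, or the growth condition enabling the pigeonhole---appears in your proposal, and it is precisely this machinery that makes the colouring ``resist trivialisation by adversarial copies'' in the way you ask for.
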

It is known that the big Ramsey degrees of $\str{R}^1_\omega$ are finite~\cite{braunfeld2023big}.  It is also easy to show:
\begin{theorem}
	Let $u>1$ be finite and $\str{A}$ be the $\omega$-edge-labeled $u$-uniform hypergraph with 1 vertex.  Then the big Ramsey degree of $\str{A}$ in $\str{R}^u_\omega$ is 1.
\end{theorem}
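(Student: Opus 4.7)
Since $u>1$, the $1$-vertex hypergraph $\str{A}$ carries no $u$-subsets and hence no edges to label, so an embedding $\str{A}\to\str{R}^u_\omega$ is just the choice of a vertex, and $\Emb(\str{A},\str{R}^u_\omega)$ is canonically the vertex set of $\str{R}^u_\omega$.  The theorem thus amounts to \emph{indivisibility} of $\str{R}^u_\omega$: for every finite partition of its vertex set, some class induces an isomorphic copy of $\str{R}^u_\omega$.  I would prove this by a combine-the-witnesses argument, generalising the classical proof of indivisibility of the Rado graph.

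Fix a partition $\omega=V_1\cup\dots\cup V_k$ of the vertex set and assume for contradiction that no $V_i$ induces a copy of $\str{R}^u_\omega$.  For each $i$ pick a witness as follows: if $V_i$ is finite, set $S_i:=V_i$ and let $\tau_i$ be any one-point extension type over $S_i$; if $V_i$ is infinite, the induced substructure on $V_i$ is countably infinite and not isomorphic to $\str{R}^u_\omega$, so by the extension-property characterisation of $\str{R}^u_\omega$ recalled in the introduction there exist a finite $S_i\subseteq V_i$ and a one-point extension type $\tau_i$ over $S_i$ (i.e., an assignment of labels in $\omega$ to each $u$-set formed by a new vertex and a $(u-1)$-subset of $S_i$) such that no $v\in V_i\setminus S_i$ realises $\tau_i$ in $\str{R}^u_\omega$.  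The sets $S_i$ are pairwise disjoint since the $V_i$'s are.

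The key step is to merge the $\tau_i$'s into a single one-point extension type $\tau$ over $S:=S_1\cup\dots\cup S_k$ and apply the extension property.  For each $u$-set formed by the new vertex and a $(u-1)$-subset $T\subseteq S$: if $T\subseteq S_i$ (for the unique such $i$) let $\tau$ copy the label that $\tau_i$ puts on this $u$-set, and for the \emph{mixed} $(u-1)$-subsets $T$ meeting several $S_i$'s assign any label, say $0$.  The extension property of $\str{R}^u_\omega$ produces some $v\in\omega\setminus S$ realising $\tau$; this $v$ lies in some $V_j$ with $v\in V_j\setminus S_j$, and by construction $v$ realises $\tau_j$ over $S_j$ in $\str{R}^u_\omega$, contradicting the choice of $\tau_j$.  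The only real subtlety (vacuous for $u=2$, substantive for $u\ge 3$) is the handling of mixed $(u-1)$-subsets of $S$: they are unconstrained by any $\tau_i$ and may be labelled freely precisely because the label alphabet $\omega$ imposes no restriction on the labels of $u$-sets of $\str{R}^u_\omega$.
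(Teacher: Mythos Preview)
Your argument is correct. The reduction to indivisibility of $\str{R}^u_\omega$ is exactly right, and the ``combine-the-witnesses'' argument via the one-point extension property is the standard and valid way to establish it; the passage from failure of the full extension property to failure of a one-point extension is justified because finite extensions can be realised one vertex at a time.

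Note that the paper does not actually supply a proof of this theorem: it simply prefaces the statement with ``It is also easy to show'' and moves on. So there is no paper proof to compare against; your proposal fills in precisely the kind of argument the authors presumably had in mind.
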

Consequently, our result concludes the characterisation of unrestricted structures with finite big Ramsey degrees (see~\cite{braunfeld2023big} for precise definitions).
Our proof introduces a new technique that complements the existing arguments for infinite lower bounds which can be divided into three types:
Counting number of oscillations of monotone functions assigned to sub-objects~\cite{ChEW_pseudotree, Todorcevic_Roscillate, Bartosova2025},
study of the partial order of ages (ranks or orbits) of vertices~\cite{sauer2003canonical}, and arguments based on the distance and diameter in metric spaces~\cite{Laflamme2006}.

Solving the question about finiteness of big Ramsey degrees of $\str{R}^2_\omega$ suggests the following question about its reduct, which forgets the actual labels of edges and only records information about pairs of vertices with equivalent labels:
\begin{problem}
	Let $L$ be a relational language with a single quaternary relation $R$ and $\mathcal K$ the class of all finite $L$-structures
	$\str{A}$ such that 
	\begin{enumerate}
		\item for every $(a,b,c,d)\in R^\str{A}$ it holds that $a\neq b$, $c\neq d$ and $(c,d,a,b)\in R^\str{A}$,
		\item for pair of distinct vertices $a,b$ of $\str{A}$ it holds that $(a,b,a,b),(a,b,b,a)\in R^\str{A}$,
		\item whenever $(a,b,c,d)$ and $(c,d,e,f)$ is in $R^\str{A}$ then also $(a,b,e,f)\in  R^\str{A}$.
	\end{enumerate}
	(In other words, $R^\str{A}$ defines an equivalence on 2-element subsets of vertices of $A$.)
	Does the \Fraisse{} limit of $\mathcal K$ have finite big Ramsey degrees?
\end{problem}
It is known that $\mathcal K$ has a precompact Ramsey expansion~\cite{Hubicka2016,hubicka2025twenty} (fixing a linear ordering of vertices as well as a linear ordering of equivalence classes) and thus finite small Ramsey degrees. However, the question about the finiteness of big Ramsey degrees is fully open.
\section{Compressed tree of types}
We devote the rest of this abstract to a discussion of the proof of Theorem~\ref{thm:main}.
Toward that, we fix $u>1$ and a hypergraph $\str{R}^u_\omega$ with vertex set $\omega$ (that is, we work with an arbitrary but fixed enumeration of $\str{R}^u_\omega$). 
We will construct explicit colourings which contradict the existence of big Ramsey degrees in $\str{R}^u_\omega$.

Our construction is based on ideas used for analyzing structures which \emph{do} have finite big Ramsey degrees. This is done using Ramsey-type theorems working with the so-called \emph{tree of types}, see e.g.~\cite{hubicka2024survey}.  The main difficulty of applying this technique to $\str{R}^u_\omega$ is the fact that the tree of types of $\str{R}^u_\omega$ is infinitely branching.  We overcome this problem by using a related tree which is finitely branching but the number
of immediate successors of a vertex grows very rapidly. This lets us reverse the argument and instead of showing that big Ramsey degrees are finite, we obtain enough structure to show that they are infinite.

Let us introduce the key definitions. Put $L=\omega\cup \{\star\}$ where $\star$ will play the role of a special label which intuitively means that the information is ``missing''.
\begin{definition}[$f$-type]
	Let $f\colon \omega\to\omega\cup \{\omega\}$ be an arbitrary function.  We call an $L$-edge-labeled $u$-uniform hypergraph $\str{X}$ an \emph{$f$-type of level $\ell$} if:
	\begin{enumerate}
		\item The vertex set of $\str{X}$ is $X=\{0,1,\ldots,\ell-1\}\cup \{t\}$ where $t$ is a special vertex, called the \emph{type vertex}.
		\item For every $E\in \binom{X}{u}$ with $e_\str{X}(E)\neq \star$ it holds that $t\in E$ and $e_\str{X}(E)<f(\max (E\setminus \{t\})).$ 
		\item For every $E\in \binom{X}{u}$ with $t\in E$ such that $f(\max (E\setminus \{t\}))=\omega$ it holds that $e_\str{X}(E)\neq \star$.
	\end{enumerate}
	We also call a hyper-graph $\str{X}$ simply an \emph{$f$-type} if it is an $f$-type of level $\ell$ for some $\ell\in \omega$. In this situation we put $\ell(\str{X})=\ell$.
\end{definition}
\begin{definition}[Tree of $f$-types]
	Let $f\colon \omega\to\omega\cup \{\omega\}$ be an arbitrary function. By $T_f$ we denote the set of all $f$-types. 
	We will view $T_f$ as a (set-theoretic) tree equipped with a partial order $\sqsubseteq$ and operation $\meet$ (\emph{meet}) defined as follows:
	Given $f$-types $\str{X},\str{Y}\in T_f$ we put $\str{X}\sqsubseteq \str{Y}$ if and only if $\str{X}$ is an (induced) sub-structure of $\str{Y}$.
	By $\str{X}\meet\str{Y}$ we denote the (unique) $f$-type $\str{Z}\in T_f$ such that $\str{Z}\sqsubseteq \str{X}$, $\str{Z}\sqsubseteq \str{Y}$ of largest level among all $f$-types with this property.
	Finally, given integer $\ell$, we put $T_f(\ell)=\{\str{X}\in T_f:\ell(\str{X})=\ell\}$
	and call it the \emph{level $\ell$} of $T_f$.  We call $\str{X}\in T_f$ an \emph{immediate successor} of $\str{Y}\in T_f$ if and only if $\str{Y}\sqsubseteq \str{X}$ and $\ell(\str{X})=\ell(\str{Y})+1$.
\end{definition}

	The usual tree of types corresponds to using the constant function $f^\omega$ where $f^\omega(i)=\omega$ for every $i\in \omega$.  Every $f^\omega$-type $\str{X}$ of level $\ell$ can be thought of as a one vertex extension
	of some $\omega$-edge-labeled $u$-uniform hypergraph $\str{A}$ with vertex set $\{0,1,\ldots,\ell-1\}$. For this reason we put $e_\str{X}(E)=\star$ for every $E\in \binom{\{0,1,\ldots, \ell-1\}}{u}$ since this label is determined by $\str{A}$. We will consider functions $f$ with $\mathrm{Im}(f)\subseteq \omega$ and then $f$-types capture only partial information about these one vertex extensions.  We make this explicit as follows:

\begin{definition}[$f$-type of a vertex]
	Given $v\in \str{R}^u_\omega$, the \emph{$f$-type of $v$}, denoted by $\tp{f}{v}$, is an $f$-type $\str{X}$ of level $v$ where given $E\in \binom{X}{u}$, and writing $E'=(E\setminus \{t\})\cup \{v\}$, we have 
	$$e_\str{X}(E)=
	  \begin{cases}
		  e_{\str{R}^u_\omega}(E')&\hbox{if }t\in E\hbox { and }e_{\str{R}^u_\omega}(E')<f(\max (E\setminus \{t\}))\\
		  \star &\mathrm{otherwise.}\\
	  \end{cases}$$
\end{definition}
	Notice that for every choice of $f$ it follows by universality and homogeneity of $\str{R}^u_\omega$ that for every $f$-type $\str{X}$ there exist infinitely many vertices $v$ of $\str{R}^u_\omega$ satisfying $\tp{f}{v}\sqsupseteq\str{X}$.

\section{Persistent colouring of $\str{R}^u_\omega$}
	If function $f\colon \omega\to\omega\cup\{\omega\}$ is fixed then every vertex $v$ of $\str{R}^u_\omega$ is associated with the $f$-type $\tp{f}{v}\in T_f$. Given two vertices of $\str{R}^u_\omega$, we can then study their iterated meet closure in the tree $T_f$ defined as follows.
\begin{definition}
	\label{def:height}
	Given a pair of nodes $\str{X},\str{Y}\in T_f$, its \emph{$f$-height}, denoted by $\height{f}{\str{X}}{\str{Y}}$, is the number of repetitions of the following procedure:
	\begin{enumerate}
		\item Put $\str{Z}=\str{X}\meet \str{Y}$.
		\item If $\tp{f}{\ell(\str{Z})}=\str{Z}$ terminate.
		\item Repeat from step 1 with $\str{X}=\tp{f}{\ell(\str{Z})}$ and $\str{Y}=\str{Z}$.
	\end{enumerate}
	Given vertices $v,w\in R^u_\omega$ we also put $\height{f}{v}{w}=\height{f}{\tp{f}{v}}{\tp{f}{w}}.$
\end{definition}
\begin{theorem}
	\label{thm:main2}
	Assume that $f(\ell)\colon \omega\to\omega$ is a function satisfying 
	$$f(\ell)\geq \prod_{u-2\leq i<\ell} (f(i)+1)^{\binom{i}{u-2}}$$
	for every $\ell\in \omega$.
	Then for every embedding $\varphi \colon \str{R}^u_\omega\to \str{R}^u_\omega$ there exists integer $m$ such that for every $n>m$ there exist vertices $v,w\in \varphi[R^u_\omega]$ satisfying
	\begin{enumerate}
		\item if $u=2$ then $e_{\str{R}^u_\omega}(\{v,w\})=0$ and,
		\item $\height{f}{v}{w}=n$.
	\end{enumerate}
\end{theorem}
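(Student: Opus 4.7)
The plan is to construct, for every sufficiently large~$n$, an explicit pair $(v,w) \in \varphi[R^u_\omega]^2$ with $\height{f}{v}{w} = n$ and, when $u = 2$, $e_{\str{R}^u_\omega}(\{v,w\}) = 0$. The central object is a \emph{cascade}: a descending chain $\str{Z}_n \sqsubseteq \str{Z}_{n-1} \sqsubseteq \cdots \sqsubseteq \str{Z}_1$ in $T_f$ at strictly decreasing levels $\ell_n < \ell_{n-1} < \cdots < \ell_1$ satisfying $\tp{f}{\ell_n} = \str{Z}_n$ and, for every $k < n$, $\tp{f}{\ell_k}|_{\ell_{k+1}} = \str{Z}_{k+1}$ together with $\str{Z}_k(\ell_{k+1}) \neq \tp{f}{\ell_k}(\ell_{k+1})$. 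Unwinding Definition~\ref{def:height} shows that any pair $(v,w)$ with $\tp{f}{v} \meet \tp{f}{w} = \str{Z}_1$ has height exactly $n$: the procedure traces the chain $\str{Z}_1 \to \str{Z}_2 \to \cdots \to \str{Z}_n$ and terminates precisely at step~$n$.

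I would build the cascade top-down, relying only on the extension property of~$\str{R}^u_\omega$. Pick $\ell_n$ arbitrarily and set $\str{Z}_n := \tp{f}{\ell_n}$. At each step $k+1 \to k$, the remark following the definition of $\tp{f}{\cdot}$ produces infinitely many candidates $\ell_k > \ell_{k+1}$ with $\tp{f}{\ell_k}|_{\ell_{k+1}} = \str{Z}_{k+1}$; pick one and extend $\str{Z}_{k+1}$ to $\str{Z}_k$ by choosing at position~$\ell_{k+1}$ any of the $f(\ell_{k+1})$ labels different from $\tp{f}{\ell_k}(\ell_{k+1})$. The remaining coordinates of $\str{Z}_k$ strictly between $\ell_{k+1}$ and $\ell_k$ stay \emph{free}; I will exploit this freedom to align the top type $\str{Z}_1$ with the image $\varphi[R^u_\omega]$ in the next stage.

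The delicate part is locating $v, w \in \varphi[R^u_\omega]$ whose $f$-types share $\str{Z}_1$ as a common prefix and disagree at position~$\ell_1$. A priori the image may avoid arbitrarily prescribed prefixes, so extension inside $\str{R}^u_\omega$ does not transfer to $\varphi[R^u_\omega]$. The plan is first to extract a \emph{popular branch}: since $|T_f(\ell)| \leq f(\ell)$ is finite by the growth hypothesis, iterated pigeonhole together with K\"onig's lemma applied to the infinite set $\varphi[R^u_\omega]$ yields a coherent chain of types $\str{Y}_\ell \in T_f(\ell)$ such that for every~$\ell$ infinitely many $v \in \varphi[R^u_\omega]$ satisfy $\tp{f}{v}|_\ell = \str{Y}_\ell$. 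I would then run the cascade construction along this branch, using the free coordinates of each $\str{Z}_k$ so that the final $\str{Z}_1$ coincides with $\str{Y}_{\ell_1}$. Because $\varphi[R^u_\omega] \cong \str{R}^u_\omega$ is itself universal and, for $\ell_1 \in \varphi[R^u_\omega]$, the edge $e(\{v, \ell_1\})$ is internal to the image, extension inside $\varphi[R^u_\omega]$ produces at least two distinct $f$-continuations of $\str{Y}_{\ell_1}$ realized by infinitely many image vertices (here the inequality $f(\ell_1) \geq |T_f(\ell_1)|$ is essential, since it allows $\ell_1$ to be chosen on the popular branch while still admitting a branching continuation). These two vertices yield the desired $v, w$, and one further universality step inside $\varphi[R^u_\omega]$ supplies $e(\{v,w\}) = 0$ in the $u = 2$ case. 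The main obstacle is precisely this alignment---threading the cascade, which lives at specific positions of $\str{R}^u_\omega$ and is driven by the individual types $\tp{f}{\ell_k}$, through a popular branch that is an asymptotic feature of $\varphi[R^u_\omega]$; the alignment only becomes possible once all cascade levels are large enough, which is the source of the constant $m$ in the statement.
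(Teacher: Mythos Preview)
Your unwinding of the height into a cascade $\str{Z}_n \sqsubseteq \cdots \sqsubseteq \str{Z}_1$ is correct and matches what is needed. The gap is the alignment step. You want to thread the cascade along a K\"onig popular branch $(\str{Y}_\ell)_\ell$ and then use universality of $\varphi[R^u_\omega]$ to produce $v,w$ in the image branching at $\str{Y}_{\ell_1}$. But universality of the image only controls edges \emph{between image vertices}, whereas the condition $\tp{f}{v}\sqsupseteq\str{Y}_{\ell_1}$ constrains edges from $v$ to all of $0,\dots,\ell_1-1$, most of which lie outside the image; you therefore cannot simultaneously prescribe the label $e(\{v,\ell_1\})$ and force $\tp{f}{v}\sqsupseteq\str{Y}_{\ell_1}$. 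Nothing prevents all image vertices extending $\str{Y}_{\ell_1}$ from sharing the same truncated label to $\ell_1$, in which case no pair of them meets at level~$\ell_1$. There is a companion problem at the bottom: your construction forces $\str{Z}_n=\tp{f}{\ell_n}$ while alignment with the popular branch forces $\str{Z}_n=\str{Y}_{\ell_n}$, and you give no argument that any vertex $\ell_n$ of $\str{R}^u_\omega$ sits exactly on the popular branch at its own level.

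The paper sidesteps both issues by deploying the growth hypothesis more directly. Rather than fixing a target branch in advance, it observes that for almost every pre-image vertex $v$ one can force the image to branch at level $\varphi(v)$: choose more than $|T_f(\varphi(v))|$ pre-image vertices with pairwise distinct labels to $v$, all below $f(\varphi(v))$ (possible precisely because $f(\varphi(v))\geq|T_f(\varphi(v))|$); their images then have distinct truncated labels to $\varphi(v)$, so by pigeonhole two of them share a type in $T_f(\varphi(v))$ and hence meet exactly there. Iterating this, in the style of Laflamme--Sauer--Vuksanovic, produces sequences $w_i,w'_i\in\varphi[R^u_\omega]$ with $\height{f}{w_{i+1}}{w'_{i+1}}=\height{f}{w_i}{w'_i}+1$, so every height above some $m$ is realised. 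The idea you are missing is this pigeonhole on types versus successors applied \emph{at image levels}: it lets the image itself supply the branching, builds the cascade incrementally from the top rather than aiming at a prescribed $n$ from the bottom, and removes the need for any external alignment.
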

Notice that Theorem~\ref{thm:main2} immediately implies Theorem~\ref{thm:main}. Let $\str{A}$ be as in Theorem~\ref{thm:main} and assume $A=\{0,1\}$. If $u=2$, without loss of generality we can also assume that $e_\str{A}(\{0,1\})=0$. Given finite $n>1$, we define colouring $\chi_n\colon \Emb(\str{A},\str{R}^\omega_u) \to n$ by putting $\chi_n(h)=\height{f}{h(0)}{h(1)}\mod n$ for every $h\in \Emb(\str{A},\str{R}^u_\omega)$. By Theorem~\ref{thm:main2}, for every embedding $\varphi \colon \str{R}^u_\omega\to \str{R}^u_\omega$ there are copies of $\str{A}$ in every colour showing that the big Ramsey degree of $\str{A}$ is greater than $n$.

\begin{proof}[Proof of Theorem~\ref{thm:main2} (sketch)]
	\begin{figure}
		\centering
		\includegraphics{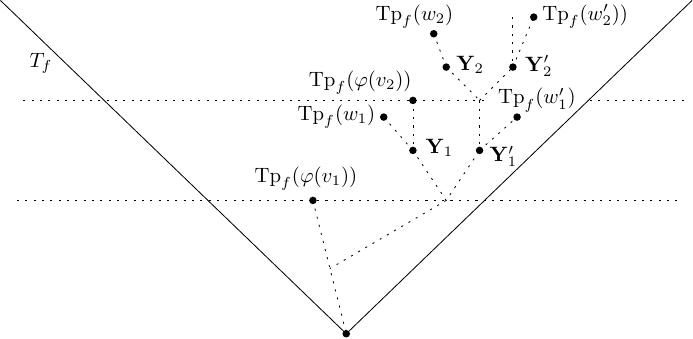}
		\caption{Configuration of tree nodes used in the proof of Theorem~\ref{thm:main2}.}
		\label{fig:2b}
	\end{figure}
	Fix $f$ and embedding $\varphi \colon \str{R}^u_\omega\to \str{R}^u_\omega$ as in the statement.  The rapid growth of $f$ ensures that for every $f$-type $\str{X}$ it holds that
	number of immediate successors of $\str{X}$ is greater than number of nodes of $T_f$ of level $\ell(\str{X})$.  This makes it possible to obtain for every vertex $v\in R_\omega^u$ (up to $u-1$ exceptions) vertices $v_+,v'_+\in R^u_\omega$ with the property that $\varphi(v)=\ell(\tp{f}{\varphi(v)})=\ell(\tp{f}{\varphi(v_+)}\meet \tp{f}{\varphi(v'_+)}$. 

	Using a technique inspired by Lachlan, Sauer, and Vuksanovic~\cite{Laflamme2006} we obtain vertices $v_0, v_1,\ldots \in R^u_\omega$, $w_1,w_2,\ldots \in \varphi[R^u_\omega]$, $w'_1,w'_2,\ldots\in \varphi[R^u_\omega]$ and nodes $\str{Y}_0,\str{Y}_1,\ldots$, $\str{Y}'_0,\str{Y}'_1,\ldots$ in configuration as depicted in Figure~\ref{fig:2b}. Then it follows that for every $i>1$ we get $\height{f}{w_{i+1}}{w'_{i+1}} = \height{f}{w_{i}}{w'_{i}}+1$.
\end{proof}

\section{Acknowledgements}
The initial variant of this construction was obtained during the  Thematic Program on Set Theoretic Methods in Algebra, Dynamics and Geometry at Fields Institute. We would like to
thank the organizers  Michael Hrušák, Kathryn Mann, Justin Moore and Stevo Todorcevic, for such a great program.  



\bibliographystyle{plain}
\bibliography{ramsey}
\end{document}